\theoremstyle{plain}  
\newtheorem*{maintheorem}{Main Theorem}
\newtheorem{theorem}{Theorem}[section]
\newtheorem{proposition}[theorem]{Proposition}
\newtheorem{lemma}[theorem]{Lemma}
\theoremstyle{definition}
\newenvironment{example*}{\exampleEnv}{\endexampleEnv}
\newenvironment{remark*}{\remarkEnv}{\endremarkEnv}
\def\gg{\mathfrak{g}}
\def\hh{\mathfrak{h}}
\def\mm{\mathfrak{m}}
\def\kk{\mathfrak{k}}
\numberwithin{equation}{section}
\DeclareMathOperator{\ric}{ric}
\DeclareMathOperator{\Ad}{Ad}
\DeclareMathOperator{\tr}{Tr}
\DeclareMathOperator{\ad}{ad}
\DeclareMathOperator{\Aut}{Aut}
\DeclareMathOperator{\SO}{SO}
\DeclareMathOperator{\SU}{SU}
\DeclareMathOperator{\Sym}{Sym}
\DeclareMathOperator{\Lie}{Lie}
\title{\bf On the Isometry Group of Immortal Homogeneous Ricci Flows}
\author{Roberto Araujo}
\address{WWU M\"unster, Mathematisches Institut\\
	Einsteinstr. 62\\
	48149 M\" unster\\
	Germany}
\email{r.araujo@uni-muenster.de}
\thanks{Funded by the Deutsche Forschungsgemeinschaft (DFG, German Research Foundation) under Germany’s Excellence Strategy EXC 2044–390685587, Mathematics Münster: Dynamics–Geometry–Structure and the CRC 1442 Geometry: Deformations and Rigidity}
\begin{document}
	
	\begin{abstract}
		We establish a structure result for the isometry group of non-compact, homogeneous manifolds admitting an immortal homogeneous Ricci flow solution.
	\end{abstract}

\maketitle

	\section{Introduction}
	
	A family of Riemannian metrics $g(t)$, $t \in [0,T)$, on a smooth manifold $M$ is a solution to the \textit{Ricci
	flow} starting at $g_0$ if it satisfies the following evolution equation introduced by Hamilton in \cite{ham}:
		\begin{equation} \label{eq:RF}
		\frac{\partial g(t)}{\partial t} = -2\ric(g(t)), \hspace{0.5cm} g(0)=g_0,
		\end{equation}
where $\ric(g)$ is the Ricci $(0,2)$-tensor of the metric $g$.

	A Ricci flow solution is called \textit{immortal} if it exists for all positive time $t$.  
	
	In a given homogeneous Riemannian manifold $\left(G/H,g_0\right)$, the Ricci flow equation \eqref{eq:RF} restricted to $G$-invariant metrics becomes a non-linear autonomous ordinary differential equation and thus there exists a unique solution $g(t)$ of $G$-invariant metrics on $G/H$ starting at $g_0$. 
	
	The homogeneous Ricci flow has been the object of research of many authors, particularly in low-dimensions. Since the seminal work of Isenberg-Jackson \cite{ij}, the Ricci flow on 3-dimensional homogeneous spaces has been thoroughly studied, as well as on some families of metrics on dimension 4 (cf. \cite{ijl}). We also refer the reader to \cite{lau}, where Lauret presents a general approach to the study of the homogeneous Ricci flow (which relies on the introduction of the bracket flow), and where one can also find many detailed examples of 3-dimensional homogeneous Ricci flow solutions. Nonetheless, in general, the long-time behavior of homogeneous Ricci flows is still not fully understood.
	
	By Bérard-Bergery's work in \cite{ber}, it has been known that a manifold admits a homogeneous positive scalar curvature metric if and only if its universal cover is not diffeomorphic to $\mathbb{R}^n$. Moreover, by work of Lafuente \cite{laf}, a homogeneous Ricci flow is immortal if and only if the scalar curvature is never positive throughout the flow. This means that for any initial homogeneous metric on $\mathbb{R}^n$ the Ricci flow is immortal. For a detailed qualitative analysis of many cases of immortal homogeneous Ricci flows up to dimension 4 we refer the reader to \cite{lot}.

	On the other hand, Böhm \cite{boe} showed that for any homogeneous metric on a compact manifold different from a flat torus the Ricci flow has finite extinction time. 
	
	Böhm and Lafuente proposed on \cite{bl18} the problem of showing that the universal cover of an immortal homogeneous Ricci flow solution is diffeomorphic to $\mathbb{R}^n$. This then got established as the dynamical Alekseevskii conjecture \cite{mfo22} after their solution of the long-standing Alekseevskii conjecture \cite{bl23}, which states an analogous claim for homogeneous Einstein manifolds of negative Ricci curvature. 
	
	In this article, we give a partial answer to the dynamical conjecture by providing the following structure result for the transitive Lie group $G$, when the flow is immortal.

\begin{maintheorem}\label{thm:main}
	Let $G/H$ be an almost-effective presentation of an immortal homogeneous Ricci flow solution $(M,g_t)$. Then $G$ has no normal, semisimple, compact subgroup $K \lhd G$.
\end{maintheorem}

	In other words, if a homogeneous manifold $M$ with an almost-effective presentation $G/H$ is such that $G$ has a non-trivial, connected, normal, semisimple, compact subgroup $K$, then for every initial $G$-invariant metric on $M$ the Ricci flow has finite extinction time. Up to a covering, if $G$ has a non-trivial, connected, normal, semisimple, compact subgroup $K$, then $G$ is a direct product Lie group $G=K \times \hat{G}$. Notice however, that this product may not be a Riemannian product and, thus, there will be in general non-trivial mixed Ricci terms. Nonetheless, our result does not require any assumption on this regard. We show that the homogeneous compact fibers $K/K\cap H$ on $M$ (which by assumption are not tori, hence admit scalar positive homogeneous metrics) shrink to a lower dimensional space in finite time.

	By a result of Dotti Miatello \cite{dot}, it was known that if a Ricci negative manifold has a transitive action by a unimodular group of isometries, then it actually has a transitive action by a semisimple group of isometries. The proof relies on an algebraic manipulation of the Ricci curvature formula for homogeneous spaces, yielding a non-negative lower bound.
	
	Jablonski-Petersen \cite{jp} expanded this result by showing the following. Let $G=G_{ss} \ltimes R$ be a Levi decomposition of the (not necessarily unimodular) transitive group of isometries of $(M,g)$, with $R$ being the radical and $G_{ss}$ being a semisimple Levi factor. If $G_c$ is the compact factor of $G_{ss}$ and $K$ is the kernel of the conjugation representation $C \colon G_c \to \Aut(R)$ restricted to $G_c$, then either the fiber $K/K\cap H$ of $M$ is $0$-dimensional or there is a Ricci non-negative direction tangent to $K/K\cap H$ \cite[Lemma 1.6]{jp}. In particular, this implies that if $(M,g)$ is Ricci negative with $G$ semisimple, then $G$ is of non-compact type \cite[Proposition 1.2]{jp}, since the radical and hence the representation $C$ is trivial. 
	
	On this paper, we present in Proposition \ref{prop:algebraic Bochner} a stronger version of \cite[Lemma 1.6]{jp} by giving an uniformly positive bound for the Ricci curvature on spaces that contradict the conclusion of the main theorem. Indeed, this estimate is the key tool to prove that under these conditions the Ricci flow has finite extinction time. 
	
	In terms of the dynamical Alekseevskii conjecture, our main result allows us, when considering immortal homogeneous Ricci flows, to restrict to presentations $M=G/H$ such that, given the Levi decomposition $G=G_{ss}\ltimes R$, the conjugation representation $C \colon G_c \to \Aut(R)$ is almost-faithful. \\

 	In Section 2, we briefly talk about reductive decompositions on Lie groups, so that we can write the Ricci tensor as an algebraic expression depending on the metric and the Lie brackets of the Lie algebra $\gg$ of $G$. And thus we can directly compute an algebraic Bochner theorem for the case in hand. In Section 3, we briefly introduce the homogeneous Ricci flow and then prove some short lemmas on Dini derivatives that we need in order to exploit the previously obtained algebraic estimates for the analysis of the Ricci flow. The main theorem is then an immediate consequence of the previous results. We then end that section with a brief discussion of the dynamical Alekseevskii conjecture in low dimensions. \\
 	
\textit{Acknowledgments.} I would like to thank my PhD advisor Christoph Böhm for his support and helpful suggestions. I would also like to
thank the referee for the useful comments.
	
	\section{Uniform lower bounds for the Ricci curvature}
	
	The proof of \cite[Lemma 1.6]{jp} is an indirect one, by applying Bochner's formula for the Laplacian of the norm squared of a Killing field, which yields via the maximum principle Bochner's theorem \cite{boc}, stating that a Ricci negative compact Riemannian manifold has discrete isometry group.
	
	Böhm gave a type of direct algebraic proof of Bochner's theorem \cite[Theorem 2.1]{boe} for non-flat compact homogeneous manifolds, showing an explicit preferred direction in which the curvature is Ricci positive. Indeed, this uniformly positive bound is what he exploits in order to conclude that any Ricci flow on a non-flat compact homogeneous manifold has finite extinction time \cite[Theorem 2.2]{boe}
	
	Our goal in this section is to follow a similar approach to the one in [Boe15] by showing on Proposition \ref{prop:algebraic Bochner} that in the case where $M=G/H$ is a non-compact almost-effective presentation, with $G$ containing a normal, semisimple, compact subgroup $K$,  there is a positive lower bound for the maximum of the Ricci curvature which is uniform on any $G$-invariant metric. This directly generalizes Jablonski-Petersen's result \cite[Lemma 1.6]{jp} in a way that allows us to understand the long-time behavior of the homogeneous Ricci flow in those cases.\\

	A Riemannian manifold $(M^n, g)$ is said to be homogeneous if its isometry group
	$I(M, g)$ acts transitively on $M$. If $M$ is connected (which we will assume from here onward unless otherwise stated), then each transitive, closed Lie subgroup $G <I(M, g)$ gives
	rise to a presentation of $(M, g)$ as a homogeneous space with a $G$-invariant metric
	$(G/H, g)$, where $H$ is the isotropy subgroup of $G$ fixing some point $p \in M$. We call this space a \textit{Riemannian homogeneous space}. 
	
	The $G$-action induces a Lie algebra homomorphism $\gg \to \mathfrak{X}(M)$ assigning to each $X \in \gg$ a Killing field on $(M, g)$, also denoted by X, and given by
	
	\[X(q) \coloneqq \left. \frac{d}{dt}\right|_{t=0} \exp (tX)\cdot q, \hspace{0.5cm} q \in M.\]
	
	If $\hh$ is the Lie algebra of the isotropy subgroup $H < G$ fixing $p \in M$, then it can be characterized as those $X \in \gg$ such that $X(p)=0$. Given that, we can take a complementary Ad($H$)-module $\mm$ to $\hh$ in $\gg$ and identify $\mm \cong T_pM$ via the above infinitesimal correspondence.
	
	In general, a homogeneous space $G/H$ is called \textit{reductive} if there exists a complementary vector space $\mm$ such that for the Lie algebras of $G$ and $H$, respectively $\gg$ and $\hh$, we have
	
	\begin{align*}
		\gg = \hh \oplus \mm \nonumber, \hspace{0.5cm} \Ad(H)(\mm) \subset \mm.
	\end{align*}
	
	This is always possible in the case of Riemannian homogeneous spaces, since, by a classic result on Riemannian geometry \cite[Chapter VIII, Lemma 4.2]{doC}, an isometry is uniquely determined by the image of the point $p$ and its derivative at $p$, hence the isotropy subgroup $H$ is a closed subgroup of $\SO(T_pM)$, and in particular it is compact. Indeed, if $H$ is compact, then one can average over an arbitrary inner product over $\gg$ to make it Ad($H$)-invariant and hence take $\mm \coloneqq \hh^{\perp}$. Given that, one can identify $\mm \cong T_{eH}G/H$ once and for all and with this identification there is a one-to-one correspondence between homogeneous metrics in $M \coloneqq G/H$, $p \cong eH$ and Ad($H$)-invariant inner products in $\mm$.

	Once we assumed that $G < I(M, g)$, by \cite[Chapter VIII, Lemma 4.2]{doC}, we have that $G/H$ is an \textit{effective presentation}, which means that the ineffective kernel given by $N\coloneqq\{h \in H \ | \ h\cdot q=q, \hspace{0.2cm}\forall q \in M\}=\bigcap_{g \in G} gHg^{-1}$ is trivial. In other words, this means that $G$ and $H$ have no non-trivial common normal subgroup.
	
	On the other hand, by the correspondence above, given a reductive decomposition $\gg=\hh\oplus\mm$, such that $\gg$ and $\hh$ have no non-trivial common ideal and an inner product $\langle \cdot, \cdot \rangle$ on $\mm$ such that ad($\hh$) is a subalgebra of skew-symmetric operators, we can reconstruct the Riemannian homogeneous space $(M,g)$ with \textit{almost-effective presentation} $M=\tilde{G}/\tilde{H}$. Namely, such that the ineffective kernel $\bigcap_{g \in \tilde{G}} g\tilde{H}g^{-1}$ is discrete, where $\tilde{G}$ and  $\tilde{H}<\tilde{G}$ are integral groups for the Lie algebras $\gg$ and $\hh$ respectively. In this manner we can restrict the problem to the Lie algebra level. Observe however that in this case $\tilde{H}$ is not necessarily compact, for example it could be the universal cover of a torus. It is clear that we can always assume that a presentation of a homogeneous manifold $M$ is effective or almost-effective, since we can quotient the transitive group $G$ by its ineffective kernel $N$, having then an effective presentation given by the still transitive action of $G/N$ on $M$, and analogously for the almost-effectiveness on the Lie algebra level.\\
	
	Let $(M,g)$ be a Riemannian homogeneous manifold with an almost-effective presentation $M=G/H$ and reductive decompostion $\gg = \hh \oplus \mm$ on a base point $p$, with the identification $\mm \cong T_pM$. The formula for the Ricci curvature of a homogeneous Riemannian manifold, \cite[Corollary 7.38]{be}, is given by 
	\begin{equation}\label{eq:ricci formula}
		\ric_g(X,X)= - \frac{1}{2}B(X,X) -\frac{1}{2}\sum_i \Vert[X,X_i]_{\mm}\Vert_g^2+ \frac{1}{4}\sum_{i,j}g([X_i,X_j]_{\mm},X)^2 - g([H_g,X]_{\mm},X).
	\end{equation}
	Here $[\cdot,\cdot]_\mm$ is the projection of the Lie brackets according to the reductive decomposition $\hh\oplus\mm$, $B$ is the Killing form, $\{X_i\}_{i=1}^n$ is an orthonormal basis of $\mm$, and $H_g$ is the mean curvature vector defined by $g(H_g,X) \coloneqq \tr(\ad_X)$.
	
	We are ready to prove the following proposition which is the geometrical essence of our main result.
	
	\begin{proposition} \label{prop:algebraic Bochner}
		Let $G/H$ be an almost-effective presentation of $(M,g)$. Assume there is a normal, semisimple, compact subgroup $K \lhd G$. Then every $G$-invariant metric on $G/H$ has a positive Ricci direction. 
	\end{proposition}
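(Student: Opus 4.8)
The plan is to imitate Böhm's algebraic proof of Bochner's theorem \cite{boe}: rather than argue indirectly as in \cite{jp}, I would exhibit one explicit tangent direction, lying along the compact fiber, on which \eqref{eq:ricci formula} is strictly positive, with a bound depending only on $\gg,\hh,\kk$ and not on $g$.

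First the algebraic reductions. Since $K\lhd G$ is compact, $\kk$ is an ideal of $\gg$, so the restriction of the Killing form satisfies $B|_\kk=B_\kk$; as $K$ is non-abelian, the semisimple part $\kk_{\mathrm{ss}}:=[\kk,\kk]$ is a nonzero ideal of $\gg$ on which $B$ is negative definite. Almost-effectiveness forbids $\kk_{\mathrm{ss}}\subseteq\hh$ (otherwise the ideal $\kk_{\mathrm{ss}}$ would sit inside the finite ineffective kernel), so, choosing the reductive complement compatibly, $\pp:=\kk_{\mathrm{ss}}\cap\mm$ is a nonzero $\mathrm{Ad}(H)$-module. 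Finally, compactness makes $\kk$ unimodular, whence $\mathrm{tr}(\mathrm{ad}_Y)=0$ for all $Y\in\kk$; this forces the ambient mean curvature vector to satisfy $H_g\perp\kk$ and makes the intrinsic mean curvature of the fiber vanish.

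The key device is to fix the bi-invariant background $Q:=-B|_{\kk_{\mathrm{ss}}}$, which is positive definite, and to write $g|_\pp=Q(S\,\cdot\,,\cdot\,)$ for a $Q$-self-adjoint $S>0$. I would take as test direction a $Q$-unit eigenvector $X\in\pp$ of $S$ for its largest eigenvalue $s_{\max}$. Because $\kk_{\mathrm{ss}}$ is an ideal, $\mathrm{ad}_X$ maps all of $\gg$ into $\kk_{\mathrm{ss}}$, where it is $Q$-skew, so in a $Q$-orthonormal eigenbasis the structure constants are totally antisymmetric; this lets me pair the negative term $-\tfrac12\sum_i|[X,X_i]_\mm|_g^2$ against the positive term $\tfrac14\sum_{i,j}g([X_i,X_j]_\mm,X)^2$ of \eqref{eq:ricci formula} one eigenvalue pair at a time. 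A short symmetrization shows that each such paired contribution is nonnegative precisely because $s_{\max}$ is maximal (the elementary inequality $s_{\max}^2\ge s_\beta s_\gamma$ does the work), leaving the Killing term $-\tfrac12 B(X,X)=\tfrac12$ as a genuine surplus. The mean-curvature term $g([H_g,X]_\mm,X)$ then vanishes on this direction: on the semisimple ideal $\mathrm{ad}_{H_g}$ is an inner derivation $\mathrm{ad}_W$ with $W\in\kk_{\mathrm{ss}}$, and $Q$-skewness together with $X$ being an $S$-eigenvector kills $g(\mathrm{ad}_W X,X)$. The upshot I expect is $\mathrm{ric}_g(X,X)\ge\tfrac14\,(-B(X,X))>0$, uniformly in $g$.

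The hard part is the bookkeeping for the directions transverse to the fiber, since the sums in \eqref{eq:ricci formula} run over a whole orthonormal basis of $\mm$ and I must control the brackets of $X$ with the base directions $\mathfrak q:=\pp^{\perp}\cap\mm$. Here I would again use that each $\mathrm{ad}_Z$, $Z\in\mathfrak q$, restricts to the semisimple ideal $\kk_{\mathrm{ss}}$ as an inner derivation, so bracketing $X$ with a base direction is equivalent to bracketing it with an element of $\kk_{\mathrm{ss}}$; the same antisymmetry-and-maximality estimate then applies and shows that the base and isotropy directions contribute only nonnegatively, their net contribution carrying the factor $s_{\max}^2/s_\delta-s_\delta\ge 0$. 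Verifying that none of these transverse terms can overturn the Killing surplus is the technical heart of the argument, and it is exactly where the maximality of the eigendirection and the semisimplicity of $\kk_{\mathrm{ss}}$ are indispensable. This is what strengthens the non-negativity of \cite[Lemma 1.6]{jp} to the uniform strict positivity asserted here; the only genuinely new feature compared with the compact setting of \cite{boe} is the presence of a nonzero $H_g$, which the eigendirection choice neutralizes.
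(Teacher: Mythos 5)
Your proposal follows essentially the same route as the paper's proof: reduce to the semisimple ideal $[\kk,\kk]$, take the top eigendirection of the induced fiber metric relative to a bi-invariant background, kill the mean-curvature term using skewness of $\mathrm{ad}$ on the compact ideal together with the eigenvector property, and use maximality of the eigenvalue to show the mixed fiber--base terms are nonnegative, leaving the strictly positive purely-fiber contribution $-\tfrac14 B(X,X)$. The only cosmetic differences are that the paper simply quotes B\"ohm's estimate \cite[Theorem 3.1]{boe} for the purely fiber terms rather than re-deriving it, and works with a general $\mathrm{Ad}(KH)$-invariant background on all of $\gg$ rather than $-B|_{[\kk,\kk]}$.
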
  
	\begin{proof}
		Let $\gg$, $\kk$, and $\hh$ be Lie($G$), Lie($K$), and Lie($H$) respectively. 
		Since $K$ is a compact normal subgroup of $G$, then $C\coloneqq KH$ is a well-defined compact subgroup of $G$. 
		Let us fix an Ad($C$)-invariant background metric $\langle\cdot,\cdot\rangle$ on $\gg$. 
		
		 Now notice that since $\kk$ is an ideal of $\gg$, it is in particular an $\hh$-module, hence consider the following reductive decomposition of $\kk$ as an $\hh$-module
		
		\[	\kk= \mathfrak{k'} \oplus \mathfrak{m}_\kk,\]
	where $k'=\Lie(K\cap H)$ and $\mathfrak{m}_\kk \coloneqq \kk'^\perp \subset \kk$. By almost-effectiveness, we have that $\mm_\kk \neq \{0\}$. 
	
		Let moreover,
		
		\[	\hh = \mathfrak{k'} \oplus \mathfrak{h}_1\]
		be a decomposition of $\hh$ on $\hh$-submodules $\kk'$ and $\mathfrak{h}_1 \coloneqq\kk'^\perp \subset \hh$. 
		
		From this we get the following $\hh$-submodules direct sum decomposition for the $\hh$-module $\hh+\kk$,
		
		\[\hh+\kk =\mathfrak{h}_1\oplus \mathfrak{k'} \oplus \mathfrak{m}_\kk.\]
		Finally, let us take a complementary $\hh$-submodule $\mm'\coloneqq\left(\hh+\kk\right)^\perp$ to $\hh+\kk$ on $\gg$. Therefore,
		
	\[\gg = \mathfrak{h}_1\oplus \mathfrak{k'} \oplus \mathfrak{m}_\kk \oplus \mm',\]
		and we can do the identification $T_{eH}G/H \cong \mm \coloneqq \mathfrak{m}_\kk \oplus \mm'$.
		
		Let us write $g(\cdot,\cdot) = \langle P\cdot, \cdot\rangle$, where $P$ is an Ad($H$)-equivariant positive definite operator on $\mm$.
	 	If $Pr_{\mm_\kk}^{\perp}$ is the $\langle \cdot, \cdot \rangle$-orthogonal projection onto $\mm_\kk$, then the operator $P_{\mm_\kk} \coloneqq Pr_{\mm_\kk}^{\perp}\circ \left. P \right|_{\mm_{\kk}}$
	 	is also positive definite, hence we can diagonalize it. Let $\{\bar{U}_i\}_{i=1}^n$ be a diagonalizing $\langle\cdot,\cdot\rangle$-orthonormal frame for $P_{\mm_\kk}$, with respective eigenvalues $0<p_1 \leq \ldots \leq p_n $. 
	 	
	 	If we define $U_i \coloneqq\frac{\bar{U}_i}{\sqrt{p_i}}$ for $1 \leq i \leq n$, we get a $g$-orhtonormal frame for $\mm_{\kk}$, $\{U_i\}_{i=1}^n$, that we can complete with an arbitrary $g$-orthonormal complement $\{V_i\}_{i=1}^m$ to get a frame for $\mm$. Let $X \in \mm$, then the Ricci tensor formula \eqref{eq:ricci formula} gives us the following equality:
	
		\begin{align}\label{eq:ric for us}
			\ric_g(X,X) &= -\frac{1}{2}B(X,X) -\frac{1}{2}\sum_i \Vert[X,U_i]_{\mm}\Vert^2_g+\frac{1}{4}\sum _{i,j} g([U_i,U_j]_{\mm},X)^2-g([H_g,X]_{\mm},X) \\
			&+\frac{1}{2}(\sum _{i,j} g([U_i,V_j]_{\mm},X)^2-\sum_i \Vert[X,V_i]_{\mm}\Vert^2_g) +\frac{1}{4}\sum _{i,j} g([V_i,V_j]_{\mm},X)^2. \notag
		\end{align}
			
		Observe that by our choice of reductive decomposition of $\gg$, $\mm$ is not necessarily $\langle \cdot, \cdot \rangle$-orhtogonal to $\hh$. Nonetheless, we chose $\mm$ so that $\langle \kk', \mm \rangle=0$, hence the projection to $\mm$ with respect to the reductive decomposition $\gg=\hh\oplus\mm=\left(\mathfrak{h}_1\oplus \mathfrak{k'} \right)\oplus \left(\mathfrak{m}_\kk \oplus \mm'\right)$ restricted to $\kk=\kk'\oplus\mm_\kk$ coincides with $\left.Pr_{\mm_\kk}^{\perp}\right \vert_{\kk}$.
					
			Now let $X \in \mm_\kk$ be an eigenvector of $P_{\mm_\kk}$ with eigenvalue $p$. Since $[\gg,\kk] \subset \kk$ and $\langle \kk', \mm_\kk \rangle=0$, we get that
			
			\begin{align*}
				g([H_g,X]_{\mm},X)&=\langle Pr_{\mm_\kk}^{\perp}[H_g,X],P(X)\rangle = \langle [H_g,X],P_{\mm_\kk}(X)\rangle \\
				&=p\langle [H_g,X],X\rangle=-p\langle H_g, [X,X]\rangle=0,
			\end{align*}
		where in the second line we used that $\ad_X \in \ad(\kk) \subset \mathfrak{so}(\gg)$.
		
		Let us choose now $X_n \in \mm_\kk$ to be an eigenvector with respect to the largest eigenvalue $p_n$. Since $\{U_i\}_{i=1}^n$ and $X_n$ are in the ideal $\kk$ and $\ad(\kk) \subset \mathfrak{so}(\gg)$, by the same argument as above, we get that
	
		\begin{align*}
			\sum _{i,j} g([U_i,V_j]_{\mm},X_n)^2-\sum_i \Vert[X_n,V_i]_{\mm}\Vert^2_g &= \sum _{i,j} \langle ([U_i,V_j],P_{\mm_\kk}(X_n)\rangle ^2-\sum_{i,j} \langle [X_n,V_j], P_{\mm_\kk}(U_i) \rangle ^2\\
			&=p_n^2 \sum _{i,j} \langle [U_i,V_j],X_n\rangle ^2-p_i^2\sum_{i,j} \langle [X_n,V_j], U_i \rangle ^2 \\
			&=\sum _{i,j} \langle [U_i,X_n],V_j\rangle^2(p_n^2-p^2_i) \geq 0.
		\end{align*}
	
		If we consider the expression \eqref{eq:ric for us} for $\ric_g(X,X)$, we have thus shown that in the direction of an eigenvector $X_n \in \mm_\kk$ with respect to the largest eigenvalue $p_n$
		
		\begin{equation} \label{eq:ricci inequality}
			\ric_g(X_n,X_n) \geq -\frac{1}{2}B(X_n,X_n) -\frac{1}{2}\sum_i \Vert[X_n,U_i]_{\mm}\Vert^2_g+\frac{1}{4}\sum _{i,j} g([U_i,U_j]_{\mm},X_n)^2.
		\end{equation}
		
		Moreover, observe that since $\kk$ is an ideal of $\gg$, the Killing form of $\kk$ is the same as the Killing form of $\gg$ restricted to $\kk$ and $[X,U_i]_\mm = [X,U_i]_{\mm_\kk}$, where in the right-hand side we consider the projection of the brackets of $\kk$ with respect to the reductive decomposition $\kk = \kk'\oplus \mm_\kk$. 
		
		Thus, the right-hand side of the expression \eqref{eq:ricci inequality} is the Ricci tensor of $K/{K\cap H}$ with the metric induced by $\left.g\right|_{\mm_\kk}$ evaluated in the direction $X_n$ and by almost-effectiveness $\left(K/K\cap H,g\right)$ is not flat. Hence, using the estimate for $\ric_{K/K\cap H}$ obtained in the proof of  \cite[Theorem 3.1]{boe}, we get that
	
		\begin{align*}
			\ric_g(X,X) \geq \ric _{K/K\cap H}(X,X) \geq -\frac{B(X,X)}{4}>0,
		\end{align*} 
		where $X \in \mm_{\kk}$ is an eigenvector for the largest eigenvalue of $P_{\mm_{\kk}}$.
	\end{proof}

\begin{remark*}
Geometrically, the compact normal subgroup $K$ generates an extrinsically isometric foliation on $M$ by non-flat compact homogeneous manifolds. Then we can see that this lemma is a kind of algebraic Bochner theorem for this type of normal compact foliation. 
\end{remark*}

\section{Homogeneous Ricci Flow}

	In full generality, the Ricci flow \eqref{eq:RF} is a non-linear partial differential equation. In the case where $M$ is compact, Hamilton \cite{ham} proved short time existence and uniqueness of it. Later, Shi \cite{shi} showed that if $(M,g_0)$ is complete, noncompact, with bounded curvature, then the Ricci flow has a solution with bounded curvature on a short time interval and Chen and Zhu \cite{chenzu} proved the uniqueness of the flow within this class of complete and bounded curvature Riemannian metrics.
	
	Since every homogeneous manifold is complete and has bounded curvature, then there is a unique Ricci flow solution $g(t)$ which is complete with bounded curvature with an initial homogeneous metric $g_0$. From uniqueness it follows by diffeomorphism equivariance of the Ricci tensor that the Ricci flow preserves isometries \cite[Corollay 1.2]{chenzu}. Thus this complete bounded curvature Ricci flow solution $g(t)$ with an initial $G$-invariant metric $g_0$ remains $G$-invariant. In this case, we call the Ricci flow \textit{homogeneous} and the Ricci flow equation becomes the following autonomous non-linear ordinary differential equation on the space of $\Ad(H)$-invariant inner products on $\mm$
	
	\begin{align}\label{eq:ricci flow}
		\frac{dg(t)}{dt} &= -2\ric(g(t)), \hspace{0.5cm} g(0)=g_0.
	\end{align}
	The Ricci $(0,2)$-tensor in this case can be seen as the following smooth map 
	\[\ric: (\Sym^2(\mm))^H_+ \to (\Sym^2(\mm))^H,\]
	where $(\Sym^2(\mm))^H$ is the non-trivial vector space of Ad($H$)-invariant symmetric bilinear forms in $\mm$ and $(\Sym^2(\mm))^H_+$ the open set of positive definite ones.
	
	Note that by classical ODE theory, given an initial $G$-invariant metric $g_0$ corresponding to an initial $\Ad(H)$-invariant inner product, there is a unique $\Ad(H)$-invariant inner product solution corresponding to a unique family of $G$-invariant metrics $g(t)$ in $M$. And indeed one can use that to define the homogeneous Ricci flow on the locally homogeneous incomplete case (cf. \cite{bl18}). \\
	
	We will now establish some simple lemmas on analysis of metric spaces to be able to use our algebraic estimates even when the evolution of the largest (or smallest) eigenvalues above considered are not evolving smoothly.
	
	The next lemma is equivalent to the one in \cite[Lemma B.40]{chow}. The only difference is that there the authors proved a version for upper right-hand Dini derivatives and we use here for better convenience upper left-hand Dini derivatives. The following lemma can be used by us in a more straightforward way, so for the sake of completeness we prove it here.
	
	\begin{lemma} [Lemma B.40, \cite{chow}] \label{lemma:Dini}
		Let $C$ be a compact metric space, $I$ an interval of $\mathbb{R}$, and a function $g: C \times I \to \mathbb{R}$, such that $g$ and $\frac{\partial g}{\partial t}$ are continuous. Define $\phi \colon I \to \mathbb{R}$ by
		\[\phi(t) \coloneqq \sup_{x \in C}g(t,x)\]
		and its upper left-hand Dini derivative by
		\[\frac{d_L^+\phi(t) }{dt} \coloneqq \limsup_{h \to 0^+}\frac{\phi(t)-\phi(t-h)}{h}.\]
		
		Let $C_t \coloneqq \{x \in C \ | \ \phi(t)=g(t,x)\}$. We have that $\phi$ is continuous and that for any $t \in I$
		\[\frac{d_L^+\phi(t) }{dt} = \min_{x \in C_t}\frac{\partial g}{\partial t}(t,x).\]
	\end{lemma}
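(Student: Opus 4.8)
The plan is to prove the two assertions separately: first the continuity of $\phi$ together with some preliminary observations, and then the Dini-derivative identity, which I would establish via two opposite inequalities. Throughout I tacitly assume $t$ is not the left endpoint of $I$, so that $t-h \in I$ for all small $h>0$ and the left-hand Dini derivative is meaningful. For continuity, I would restrict to a compact slab $C \times [t-\delta, t+\delta]$, on which $g$ is uniformly continuous; then for $s$ near $t$ the quantity $\sup_{x}|g(s,x)-g(t,x)|$ is small, and since $|\phi(s)-\phi(t)| \le \sup_x|g(s,x)-g(t,x)|$, continuity of $\phi$ follows. I would also record at this stage that, for fixed $t$, the set $C_t$ is the level set on which the continuous function $g(t,\cdot)$ attains its maximum over the compact space $C$; hence $C_t$ is nonempty and closed, therefore compact, and consequently the continuous function $\frac{\partial g}{\partial t}(t,\cdot)$ attains its minimum $L \coloneqq \min_{x\in C_t}\frac{\partial g}{\partial t}(t,x)$ on it.

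For the upper bound $\frac{d_L^+\phi(t)}{dt} \le L$, I would fix an arbitrary $x \in C_t$, so that $\phi(t)=g(t,x)$ while $\phi(t-h)\ge g(t-h,x)$; the difference quotient then satisfies $\frac{\phi(t)-\phi(t-h)}{h} \le \frac{g(t,x)-g(t-h,x)}{h}$. As $h\to 0^+$ the right-hand side converges to $\frac{\partial g}{\partial t}(t,x)$, so the $\limsup$ of the left-hand side is at most $\frac{\partial g}{\partial t}(t,x)$. Taking the infimum over $x \in C_t$ yields $\frac{d_L^+\phi(t)}{dt} \le L$.

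For the reverse inequality $\frac{d_L^+\phi(t)}{dt} \ge L$ I would use a compactness and selection argument. For each small $h>0$ pick a maximizer $x_h \in C$ with $\phi(t-h)=g(t-h,x_h)$; since $\phi(t)\ge g(t,x_h)$ we get $\phi(t)-\phi(t-h) \ge g(t,x_h)-g(t-h,x_h)$, and by the mean value theorem in the $t$-variable there is $\xi_h \in (t-h,t)$ with $\frac{\phi(t)-\phi(t-h)}{h} \ge \frac{\partial g}{\partial t}(\xi_h,x_h)$. Given any sequence $h_n\to 0^+$, compactness of $C$ lets me pass to a subsequence along which $x_{h_n}\to x_0$; continuity of $\phi$ and of $g$ then forces $\phi(t)=g(t,x_0)$, that is $x_0\in C_t$. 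Since $\xi_{h_n}\to t$ and $\frac{\partial g}{\partial t}$ is jointly continuous on $C\times I$, the right-hand side converges to $\frac{\partial g}{\partial t}(t,x_0)\ge L$, so the $\limsup$ of the difference quotients is at least $L$. Combining the two inequalities gives the identity.

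The main obstacle is the lower bound: one cannot simply differentiate $\phi$, which need only be one-sidedly differentiable, so the argument must instead track the maximizers $x_h$ of the perturbed slices $g(t-h,\cdot)$ and verify that any limit point lands in $C_t$. This is precisely where compactness of $C$ and the joint continuity of $\frac{\partial g}{\partial t}$ are indispensable, and where working with the left-hand (rather than right-hand) Dini derivative keeps the mean-value point $\xi_h$ on the side of $t$ where the maximizers $x_h$ are selected, making the passage to the limit clean.
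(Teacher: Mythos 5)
Your proposal is correct and follows essentially the same route as the paper's proof: the upper bound by testing the difference quotient against a fixed maximizer $x \in C_t$, and the lower bound by selecting maximizers $x_{h}\in C_{t-h}$, extracting a convergent subsequence whose limit lies in $C_t$, and applying the mean value theorem together with the joint continuity of $\frac{\partial g}{\partial t}$. Your write-up is in fact slightly more careful than the paper's (explicit continuity argument for $\phi$, compactness of $C_t$, and the endpoint caveat), but the mathematical content is identical.
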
 
	
	\begin{proof}
		Fix $t \in I$ and let $x \in C_t$. Consider $h \to 0^+$ and observe that  $\phi(t-h) \geq g(t-h,x)$ and $\phi(t)=g(t,x)$. Therefore
		
		\begin{align*}
			\limsup_{h \to 0^+}\frac{\phi(t)-\phi(t-h)}{h} \leq \limsup_{h \to 0^+}\frac{g(t,x)-g(t-h,x)}{h} = \frac{\partial g}{\partial t}(t,x).
		\end{align*}
		
		The fact that $\sup_{y \in C} g(t,y)$ is continuous is a well-known consequence of the continuity of $g$ in $t$. This in particular implies that if $t_n \to t$ and $x_n \in C_{t_n}$ is such that $x_n \to x$, then $x \in C_t$. Now let us consider $t-h_n$ with $h_n \to 0^+$ and $x_n \in C_{t-h_n}$. By the  compactness of $C$ we can (without loss of generality) pass to a subsequence and assume that $x_n \to x \in C_t$. For each $n \in \mathbb{N}$ we can apply the mean value theorem so that we get a $s_n \in (t-h_n, t)$ such that 
		
		\begin{align*}
			\limsup_{h_n \to 0^+}\frac{\phi(t)-\phi(t-h_n)}{h_n} &\geq \limsup_{h_n \to 0^+}\frac{g(t,x_n)-g(t-h_n,x_n)}{h_n} = \lim_{h_n \to 0}\frac{\partial g}{\partial t}(s_n,x_n)=\frac{\partial g}{\partial t}(t,x).
		\end{align*}
		
	\end{proof}
	
	 Lemma \ref{lemma:Dini} will be fundamental to us when combined with the following elementary real analysis result, which we couldn't find a reference for, but should be quite well-known. Again, for the sake of completeness we prove it here.
	
	\begin{lemma} \label{lemma:Dini Dynamic}
		Let $[a,b]$ be a closed interval of $\mathbb{R}$ and $f: [a,b] \to \mathbb{R}$ a continuous function such that $\frac{d^+f}{dt} \leq 0$ and $f(a)\leq0$. Then $f(t) \leq 0$.
	\end{lemma}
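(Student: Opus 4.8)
The plan is to run a maximum-principle argument after a small linear perturbation that turns the non-strict hypothesis into a strict one. Throughout I read $\frac{d^+f}{dt}(t)$ as the upper left-hand Dini derivative $\limsup_{h\to 0^+}\frac{f(t)-f(t-h)}{h}$, consistent with Lemma \ref{lemma:Dini}.

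First I would record why a direct maximum argument on $f$ itself is not enough: if $f$ attained a positive maximum at some $t^*$, then for small $h>0$ one has $f(t^*-h)\le f(t^*)$, so the difference quotients $\frac{f(t^*)-f(t^*-h)}{h}$ are nonnegative and hence $\frac{d^+f}{dt}(t^*)\ge 0$; combined with the hypothesis $\frac{d^+f}{dt}(t^*)\le 0$ this only forces equality and yields no contradiction. The remedy is to install a strict downward drift.

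So fix $\epsilon>0$ and set $g_\epsilon(t)\coloneqq f(t)-\epsilon(t-a)$. Since $t\mapsto -\epsilon(t-a)$ is differentiable, the left-hand difference quotients split additively and taking $\limsup$ gives $\frac{d^+g_\epsilon}{dt}(t)=\frac{d^+f}{dt}(t)-\epsilon\le -\epsilon<0$ for every admissible $t$; moreover $g_\epsilon(a)=0$. Now $g_\epsilon$ is continuous on the compact interval $[a,b]$, hence attains its maximum at some $t^*$. If this maximum were positive, then since $g_\epsilon(a)=0$ we would have $t^*\in(a,b]$, leaving room to the left: for $0<h<t^*-a$ we get $g_\epsilon(t^*-h)\le g_\epsilon(t^*)$, whence $\frac{d^+g_\epsilon}{dt}(t^*)\ge 0$, contradicting $\frac{d^+g_\epsilon}{dt}(t^*)\le -\epsilon<0$. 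Therefore $\max_{[a,b]}g_\epsilon\le 0$, that is, $f(t)\le \epsilon(t-a)\le\epsilon(b-a)$ for all $t\in[a,b]$.

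Finally I would let $\epsilon\to 0^+$ to conclude $f(t)\le 0$ on $[a,b]$. I expect the only delicate point to be the bookkeeping at the endpoints—verifying that the positive maximum cannot sit at $a$ (guaranteed by $g_\epsilon(a)=0$) so that the left-hand difference quotients are genuinely available at $t^*$—together with the elementary additivity of the Dini derivative under the smooth drift; both are routine once the strict perturbation is in place.
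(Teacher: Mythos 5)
Your proof is correct. It shares the paper's central device---perturbing by a linear function of slope $\epsilon$ so that the non-strict hypothesis $\frac{d^+f}{dt}\le 0$ becomes a strict drift, then letting $\epsilon\to 0^+$---but the execution is genuinely different. The paper argues by contradiction from a hypothetical $t_0$ with $f(t_0)>0$: it compares $f$ with the support line $t\mapsto f(t_0)+\epsilon(t-t_0)$ on $[a,t_0]$, takes the infimum $t_1$ of the closed set where $f$ lies above that line, and uses the Dini hypothesis at $t_1$ to propagate the inequality a little further to the left, forcing $t_1=a$ and hence $f(a)\ge f(t_0)-\epsilon(t_0-a)$, which is absurd for small $\epsilon$. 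You instead subtract the drift globally, set $g_\epsilon(t)=f(t)-\epsilon(t-a)$, and run the first-derivative test at a maximum: a positive maximum of the continuous $g_\epsilon$ on the compact interval would sit at some $t^*>a$ (since $g_\epsilon(a)=0$), where the left-hand difference quotients are nonnegative, contradicting $\frac{d^+g_\epsilon}{dt}(t^*)\le-\epsilon<0$. Your route dispenses with the infimum-and-propagation step entirely and is arguably the cleaner of the two, at the cost of invoking compactness to extract the maximizer; the paper's version is more local, working only on $[a,t_0]$ and making explicit how the Dini condition pushes the comparison set leftward. Both proofs correctly use the left-hand Dini derivative only at points $t>a$, where it is defined, and your additivity claim $\frac{d^+g_\epsilon}{dt}=\frac{d^+f}{dt}-\epsilon$ is justified because the perturbation is differentiable.
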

	
	\begin{proof}
		Suppose by contradiction that there exists $t_0$ with $f(t_0)>0$. Given $\epsilon>0$, consider the support function $g_{\epsilon}(t) = f(t_0) + \epsilon(t-t_0)$. Now observe that the closed set $I=\{t \in [a,t_0] \ | \ f(t)\geq g_{\epsilon}(t)\}$ is non-empty and let $t_1  = \inf\{t \ | \ t \in I\}$. Then the condition $\frac{d^+f}{dt} \leq 0$ for all $t \in [a,b]$ implies that there exists $\delta>0$ such that for all $t_1-\delta <t \leq t_1$,
	
		\begin{align*}
			f(t) \geq f(t_1) + \epsilon(t-t_1) \geq g_{\epsilon}(t_1) + \epsilon(t-t_1)=f(t_0) + \epsilon(t_1-t_0) +\epsilon(t-t_1)=g_{\epsilon}(t).
		\end{align*}
		This shows that $t_1=a$, hence $f(a) \geq f(t_0)+\epsilon(a-t_0)$. Since $\epsilon>0$ is arbitrary, this implies $0\geq f(a)\geq f(t_0)>0$, contradiction. \\
	\end{proof}
	
		We are now ready to prove our main result, which we restate here in other words for convenience.  
	
		\begin{theorem} [Main Theorem] \label{theo:main}
Let $G/H$ be an almost-effective presentation of $M$. Assume there is a normal, semisimple, compact subgroup $K \lhd G$. Then for every initial $G$-invariant metric $g_0$ on $M$ the Ricci flow solution $g(t)$ has finite extinction time.
	\end{theorem}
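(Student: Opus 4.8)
The plan is to follow Böhm's strategy for compact homogeneous manifolds \cite{boe}: exhibit a geometric quantity that is forced to degenerate in finite time by the uniform Ricci estimate of Proposition \ref{prop:algebraic Bochner}. Concretely, I will track the largest eigenvalue of the evolving metric restricted to the fiber directions $\mm_\kk$ and show that it decays at least linearly while remaining strictly positive for as long as the flow exists; this forces the maximal existence time to be finite.

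Fix once and for all the Ad($C$)-invariant background metric $\langle\cdot,\cdot\rangle$ and the reductive decomposition $\gg = \hh\oplus\mm$ with $\mm_\kk\subset\mm$ as in Proposition \ref{prop:algebraic Bochner}, and write each metric of the flow as $g_t = \langle P_t\,\cdot,\cdot\rangle$ with $P_t$ positive definite and Ad($H$)-equivariant on the fixed space $\mm$. Let $S\subset\mm_\kk$ be the compact $\langle\cdot,\cdot\rangle$-unit sphere and set
\[
\phi(t) \coloneqq \sup_{X\in S}\langle P_t X, X\rangle,
\]
which is precisely the largest eigenvalue of the operator $P_{\mm_\kk}(t)$ appearing in the proof of Proposition \ref{prop:algebraic Bochner}. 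Applying Lemma \ref{lemma:Dini} to the function $(t,X)\mapsto g_t(X,X)$ on $S\times[0,T']$, for any $T'$ strictly below the maximal existence time, and using that the flow equation \eqref{eq:ricci flow} gives $\partial_t\,g_t(X,X) = -2\,\text{ric}_{g_t}(X,X)$ for fixed $X$, I obtain
\[
\frac{d_L^+\phi}{dt}(t) = \min_{X\in C_t}\bigl(-2\,\text{ric}_{g_t}(X,X)\bigr),
\]
where $C_t\subset S$ is the set of unit eigenvectors realizing the largest eigenvalue of $P_{\mm_\kk}(t)$.

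The crucial step is that Proposition \ref{prop:algebraic Bochner} controls this right-hand side uniformly in $t$. Its proof yields $\text{ric}_{g_t}(X,X)\geq -\tfrac14 B(X,X)$ for every $X$ in the top eigenspace of $P_{\mm_\kk}(t)$, and since both sides are quadratic in $X$ this applies in particular to each $X\in C_t$. Because $K$ is compact and semisimple, $-B$ is positive definite on $\kk\supseteq\mm_\kk$, so
\[
c_0 \coloneqq \min_{X\in S}\Bigl(-\tfrac14 B(X,X)\Bigr) > 0
\]
is a constant depending only on $\kk$, $B|_\kk$ and $\langle\cdot,\cdot\rangle$, hence independent of $t$ and of the metric. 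Thus $\text{ric}_{g_t}(X,X)\geq c_0$ for all $X\in C_t$, and the Dini derivative above satisfies $\frac{d_L^+\phi}{dt}(t)\leq -2c_0$ for every admissible $t$.

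It then remains to integrate this differential inequality. Applying Lemma \ref{lemma:Dini Dynamic} to $f(t)\coloneqq\phi(t)-\phi(0)+2c_0 t$ on $[0,T']$, for which $f(0)=0$ and $\frac{d^+f}{dt}=\frac{d_L^+\phi}{dt}+2c_0\leq 0$, gives $\phi(T')\leq\phi(0)-2c_0 T'$. Since $P_t$ is positive definite as long as the flow exists, we have $\phi(T')>0$, and therefore $T'<\phi(0)/(2c_0)$. As $T'$ was an arbitrary time below the maximal existence time, the flow is defined on an interval of length at most $\phi(0)/(2c_0)<\infty$, establishing the finite extinction time. The one genuine difficulty is to promote the single-metric estimate of Proposition \ref{prop:algebraic Bochner} to control of a quantity that is only Lipschitz --- not smooth --- in $t$, with a bound that is uniform along the entire flow; this is precisely what Lemmas \ref{lemma:Dini} and \ref{lemma:Dini Dynamic}, together with the metric-independence of $c_0$, accomplish.
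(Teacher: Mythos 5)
Your proposal is correct and follows essentially the same route as the paper: apply Lemma \ref{lemma:Dini} to $\sup_{X}g_t(X,X)$ over the background-unit sphere of $\mm_\kk$, bound the Dini derivative by $-2c_0<0$ via the uniform estimate $\text{ric}_{g_t}\geq -\tfrac14 B$ on top eigenvectors from Proposition \ref{prop:algebraic Bochner}, and integrate with Lemma \ref{lemma:Dini Dynamic}. Your write-up is in fact slightly more explicit than the paper's in identifying $C_t$ with the top eigenspace and in spelling out the comparison function $f(t)=\phi(t)-\phi(0)+2c_0t$.
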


	\begin{proof}
			Let  $g(t)$ be an arbitrary $G$-invariant Ricci flow \eqref{eq:ricci flow} solution on $M$. Let $\mm_\kk \subset \mm$ be as in Proposition \ref{prop:algebraic Bochner}. Given a background metric $\langle \cdot, \cdot \rangle$ as in Proposition \ref{prop:algebraic Bochner}, there exists a $\langle \cdot, \cdot \rangle$-unitary vector $X=X_t \in \mathfrak{m}_\kk$ (which depends on the metric $g(t)$) such that $\ric_{g(t)}(X,X) \geq \frac{-B(X,X)}{4}$. 
			
		Applying Lemma \ref{lemma:Dini} to $C\coloneqq\{V \in \mm_\kk \hspace{0.1cm} | \hspace{0.1cm} \langle V, V \rangle = 1 \}$ and to the Ricci flow solution $g(t)$ restricted to $C$, we get the following estimate for $k(t)\coloneqq \sup_{V \in C} g(t)(V,V)$
			\[\frac{d_L^+k(t)}{dt} \stackrel{\ref{lemma:Dini}}{\leq} \frac{dg(t)}{dt}(X_t,X_t)=-2\ric_{g(t)}(X_t,X_t) \leq \frac{B(X_t,X_t)}{2}\leq \frac{b}{2} <0,\]
			where $b$ is the maximum of the Killing form restricted to the compact $C$, which is uniformly below zero.
			
			 This implies, by Lemma \ref {lemma:Dini Dynamic}, that $k(t)=\sup_{V \in C} g(t)(V,V)$ shrinks at least linearly, therefore the flow develops a singularity in finite time.\\
	\end{proof} 

\subsection{The dynamical Alekseevskii conjecture in low dimensions}
	
	We finish this section discussing the dynamical Alekseevskii conjecture in low dimensions. We show that, up to dimension 4, indeed the only simply-connected homogeneous manifolds admitting an immortal solution to the Ricci flow \eqref{eq:ricci flow} are the ones diffeomorphic to $\mathbb{R}^n$. For dimensions up to 3, homogeneous Ricci flows have been thoroughly studied (cf. \cite{ij} and Section 3.3 of \cite{lot}). In dimension 4, the long-time behavior of the homogeneous Ricci flow has been also previously studied in depth (Section 3.4 of \cite{lot} and \cite{ijl} have an analysis for the Ricci flow on some families of metrics). It follows by \cite[Theorem 3.1]{boe} that the conjecture up to this dimension is confirmed. In dimension 5, we can use Theorem \ref{theo:main} to confirm the conjecture in a large family of examples, but in this dimension it also occurs the first examples of non-compact, simply-connected, non-contractible homogeneous manifolds which lie outside the hypothesis of our main theorem. 
	
	By a classic structure theorem on Lie groups \cite[Theorem 14.3.11]{hn}, every simply-connected  Riemannian homogeneous space $\tilde{M}=G/H$ is diffeomorphic to 
	
	\[K/H \times \mathbb{R}^n,\]
	where $K/H$ is a simply-connected compact homogeneous space. 

	We are going to use the fact that the isotropy group $H$ at the point $p \in M$, is contained in $\SO(T_pM)$ and that $\dim I(M,g) \leq \frac{n(n+1)}{2}$, where $n=\dim M$. Moreover, equality holds if and only if $M$ is a space-form. Hence, we have only to consider $H$ a proper compact subgroup of $\SO(T_pM)$. Lifting this homogeneous geometry to the universal cover we may consider $\tilde{M}$ simply-connected, non-compact, and non-contractible, with an almost-effective presentation $\tilde{G}/\tilde{H}$, where $\tilde{G}$ is simply-connected. This means we can work on the Lie algebra level and restrict the analysis to the Lie algebras $\Lie(\tilde{G})=\gg$ and $\Lie(\tilde{H})=\hh$ respectively, where $\hh$ is a compactly embedded subalgebra of $\gg$, with no non-trivial common ideal with $\gg$.
	
	By the Levi decomposition \cite[Corolllary 5.6.8]{hn}, every Lie algebra $\gg$ can be written as the semi-direct product of its radical $\mathfrak{r}$, i.e. its maximal solvable ideal, and a semisimple subalgebra $\gg_{ss}$,
	
	\[\gg =\gg_{ss} \ltimes \mathfrak{r}.\]
	Moreover, we can assume, by \cite[Lemma 14.3.3]{hn}, that $\hh$ is contatined in a maximal compactly embedded subalgebra $\kk$ such that 
	
\[\kk= \left(\gg_{ss} \cap \kk\right) \oplus\left(\mathfrak{r} \cap \kk\right) \hspace{0.5cm} \text{and}\hspace{1cm}[\kk,\kk]\subset \gg_{ss}.\]

	In dimension 1 and 2, every simply-connected non-compact smooth manifold is contractible, so there is nothing to prove.
	
	In dimension 3, the only case to consider is when $\tilde{M} = \mathbb{S}^2 \times \mathbb{R}$. Indeed, since the smallest dimensional simple compact Lie algebra is $\mathfrak{su}(2)=\mathfrak{so}(3)$ which has dimension 3 and real rank $1$, the only possibility for $\tilde{M}$ non-contractible is $\hh = \mathfrak{so}(2)=\mathbb{R}$ and thus $\dim \tilde{G}= 4$. Therefore, we must have $\tilde{G}=\SU(2) \times \mathbb{R}$ and $H = \SO(2) \times \{e\}$.
	
	In dimension 4, the possibilities for the diffeomorphism types of $\tilde{M}$ non-compact and non-contractible are $\mathbb{S}^2 \times \mathbb{R}^2$ and $\mathbb{S}^3 \times \mathbb{R}$.
	
	If $\tilde{M}$ is diffeomorphic to $\mathbb{S}^3 \times \mathbb{R}$, then $\hh$ is $\{0\}$  or $\mathfrak{so}(3)$. In the former, $\gg = \mathfrak{su}(2) \oplus \mathbb{R}$ and in the latter $\gg = \mathfrak{su}(2) \oplus \mathfrak{su}(2) \oplus \mathbb{R}$. In the latter case, $\hh = \mathfrak{so}(3)=\mathfrak{su}(2)$ is a diagonal embedding on $\kk=\mathfrak{su}(2)\oplus \mathfrak{su}(2)$, and the geometry is that of the Riemannian product of a round $3$-sphere and a line. For the case $\hh=\{0\}$ the possible geometries include all the left-invariant metrics on $\SU(2)\times \mathbb{R}$ which include non-product metrics as well. Since $\mathfrak{su}(2)\oplus \mathbb{R}$ is a compact Lie algebra, the conjecture follows in this case by \cite[Theorem 3.1]{boe}.
	
	If $\hh =\mathfrak{so}(2)$ the only possibility is $\tilde{G} = \SU(2) \times \mathbb{R}^2$, and $H = \SO(2) \times \{e\}$. For $\hh = \mathfrak{so}(2)\oplus\mathfrak{so}(2)$, the possibilities are $\gg= \mathfrak{so}(3)\oplus \mathfrak{e}(2)$, where $\mathfrak{e}(2)=\mathfrak{so}(2)\ltimes \mathbb{R}^2$ is the Lie algebra of the group of isometries of the euclidean plane, or $\gg= \mathfrak{so}(3)\oplus \mathfrak{sl}(2,\mathbb{R})$. In any case, the diffeomorphism type of these spaces is given by $\mathbb{S}^2 \times \mathbb{R}^2$ and the geometry is actually a Riemannian product either of the round $2$-sphere with the euclidean plane $\mathbb{S}^2 \times \mathbb{E}^2$ or with the hyperbolic plane $\mathbb{S}^2 \times \mathbb{H}^2$. 
	
			It is clear that up to dimension 4 the dimension of $\gg$ is low enough that, for the purposes of checking the dynamical Alekseevskii conjecture, no non-trivial representation of semisimple Lie algebras appear in the Levi decomposition of the almost-effective presentations. In dimension 5, more interesting cases occur when $\tilde{M}$ is diffeomorphic to $\mathbb{S}^2\times \mathbb{R}^3$. We have for example the following families parameterized by pairs of coprime natural numbers $p$ and $q$, $\left(\mathfrak{e}(2) \oplus \mathfrak{su(2)}\right)/ \Delta_{p,q} \mathfrak{so}(2)$, and $\left(\mathfrak{sl}(2,\mathbb{R}) \oplus \mathfrak{su(2)}\right)/ \Delta_{p,q} \mathfrak{so}(2)$, both of which by Theorem \ref{theo:main} must satisfy the conjecture. 
			
			On the other hand, we also have in dimension 5 the lowest dimensional examples of non-compact, simply-connected, non-contractible $\tilde{M}$, with almost-effective presentation $\tilde{M}=G/H$ such that $G$ has no normal, semisimple, compact subgroup. Indeed, we have as noticeable examples $\mathfrak{e}(3)/\mathfrak{so}(2)$, where $\mathfrak{e}(3)= \mathfrak{so}(3)\ltimes \mathbb{R}^3$ is the Lie algebra of the group of isometries of the euclidean space, which is a semidirect product of a compact semisimple Lie algebra and an abelian radical, and also the case $\mathfrak{sl}(2,\mathbb{C})/ \mathfrak{u}(1)$, where the transitive group $G$ is simple of non-compact type.

\end{document}